\newtheorem{theorem}{Theorem}
\newtheorem{lemma}[theorem]{Lemma}
\newtheorem{conjecture}{Conjecture}
\newtheorem{question}[conjecture]{Question}
\newtheorem*{claim*}{Claim}
\theoremstyle{definition}
\newtheorem*{definition*}{Definition}
\newtheorem{remark}[theorem]{Remark}
\newtheorem{example}[theorem]{Example}
\newcommand\ns[1]{ \left\{ {#1} \right\} }
\newcommand{\Z}{{\mathbb Z}}
\newcommand{\N}{{\mathbb N}}
\newcommand{\eqd}{\stackrel{d}{=}}
\newcommand{\cg}{c}
\newcommand\garbage[1]{}
\renewcommand{\P}{{\mathbb P}}
\newcommand{\dff}[1]{\textbf{\emph{#1}}}
\newcommand{\erk}{\hfill \ensuremath{\Diamond}} 
\newcommand{\bb}[1]{\mathbb{#1}}
\newcommand{\frgr}[1]{\mathbb{F}_{#1}}
\newcommand{\frgrp}[1]{\mathbb{F}_{#1}^{\prime}}
\newcommand{\uc}{\mathbf{u}}
\begin{document}

\title[Equivariant thinning]{Equivariant thinning over a  free group}
\author{Terry Soo and Amanda Wilkens}

\address{Department of Mathematics,
University of Kansas,
405 Snow Hall,
\indent 1460 Jayhawk Blvd,
Lawrence, Kansas 66045-7594}

\email{tsoo@ku.edu}
\urladdr{http://www.tsoo.faculty.ku.edu}
\email{awilkens@ku.edu}
\urladdr{http://www.people.ku.edu/~a548w850/}
\thanks{Funded in part by a New Faculty General Research Fund}

\keywords{Sinai factor theorem, Ornstein and Weiss,  thinning, stochastic domination,
monotone coupling, non-amenable group}
\subjclass[2010]{37A35, 60G10, 60E15}

\begin{abstract}
We construct  entropy increasing monotone  factors in the context of a Bernoulli shift over the free group of rank at least two.  
\end{abstract}

\maketitle

\section{Introduction}

Let $\kappa$ be a probability measure on a finite set $K$.   We will mainly be concerned with the simple case where $K = \ns{0,1}$, where we call $\kappa(1):= \kappa(\ns{1}) \in (0,1)$ the \dff{intensity} of $\kappa$.   Let $G$ be a group.      A \dff{Bernoulli shift over $G$ with base $(K, \kappa)$} is the measure-preserving system $(G, K^G, \kappa^G)$, where $G$ acts on $K^G$ via $(gx)(f) = x(g^{-1}f)$	 for $x \in K^G$ and $g,f \in G$.  Let  $\iota$ be a probability measure of lower intensity.  We say that a measurable map $\phi: K^G \to K^G$ is an  \dff{equivariant thinning from $\kappa$ to $\iota$} if $\phi(x)(g) \leq x(g)$ for all $x \in K^G$ and $g \in G$,   the push-forward of $\kappa^G$ under $\phi$ is $\iota^G$, and $\phi$ is equivariant $\kappa^G$-almost-surely; that is, on a set of full-measure, $\phi \circ g = g \circ \phi$ for all $g \in G$.  

\begin{theorem} 
	\label{group}
	 Let $\kappa$ and $\iota$ be probability measures on $\ns{0,1}$ and  $\iota$ be of lower intensity.  For Bernoulli shifts over the free group of rank at least two, there exists an equivariant thinning from $\kappa$ to $\iota$.  
\end{theorem}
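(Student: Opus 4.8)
Write $p = \kappa(1)$ and $q = \iota(1)$, so $0 < q < p < 1$, and set $\rho = q/p \in (0,1)$. The Cayley graph of $\mathbb{F}_{r}$ with its standard generators is the $2r$-regular tree, which for $r \ge 2$ has exponential growth; this is the feature of the problem that we intend to exploit. The plan is to build $\phi$ by a marker-and-block construction on this tree together with an Ornstein--Weiss-type device that manufactures, from $X$ itself, the extra randomness the thinning requires.

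It is worth isolating where the difficulty lies. With an auxiliary i.i.d.\ uniform field $U \sim \lambda^{\mathbb{F}_{r}}$ (say $\lambda$ the uniform law on $[0,1]$) independent of $X \sim \kappa^{\mathbb{F}_{r}}$, the rule $Y(g) := X(g)\,\mathbf{1}\{U(g) < \rho\}$ is plainly an equivariant monotone factor of $\kappa^{\mathbb{F}_{r}}\times\lambda^{\mathbb{F}_{r}}$ onto $\iota^{\mathbb{F}_{r}}$; the whole content is to perform the thinning using only the randomness already in $X$, equivariantly, and with the output having \emph{exactly} the product law $\iota^{\mathbb{F}_{r}}$. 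One cannot simply extract such a field $U$ from $X$ and then invoke this rule, since a nonconstant field defined measurably from $X$ is never independent of $X$: the extraction and the thinning must be entangled. Moreover, exactly when $p+q>1$ the Bernoulli shift with base $\iota$ has strictly larger entropy than the one with base $\kappa$; in that regime a monotone factor of this kind is impossible over $\mathbb{Z}$ (factors of Bernoulli shifts over amenable groups cannot increase entropy), so the construction must genuinely use the non-amenability of $\mathbb{F}_{r}$ --- concretely, the Ornstein--Weiss phenomenon that factors of its Bernoulli shifts can increase entropy.

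The construction has three ingredients. (i) \emph{Marker scheme.} Using a Borel--Cantelli argument along the tree, I would define from $X$ an equivariant, almost surely syndetic set $M\subseteq\mathbb{F}_{r}$ of marker sites --- those at which $X$ displays a prescribed rare pattern --- and use $M$ (if necessary through a multi-scale/stacking refinement) to cut $\mathbb{F}_{r}$ equivariantly into finite blocks of controlled size, each of one of finitely many combinatorial types and each carrying a distinguished center in $M$. (ii) \emph{Randomness generation.} Inside the region attached to each block I would apply an Ornstein--Weiss-type amplification adapted to the tree; because balls in the $2r$-regular tree grow exponentially, this yields --- equivariantly and in an \emph{exactly} measure-preserving way --- a field of i.i.d.\ labels of any prescribed finite entropy rate, in particular of rate exceeding what $\kappa$ supplies. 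This map genuinely consumes the block's configuration (consistent with the obstruction noted above) while leaving intact everything $\phi$ still needs. (iii) \emph{Thinning.} I would feed the manufactured labels into the rule of the second paragraph to decide, at each $g$ with $X(g)=1$, whether $Y(g)=1$, arranging the block-by-block bookkeeping so that the emitted $Y$-values have \emph{exactly} the product law $\iota^{\mathbb{F}_{r}}$ and satisfy $Y\le X$ pointwise. Equivariance of $\phi$ is inherited from that of each ingredient, and $\phi_*\kappa^{\mathbb{F}_{r}}=\iota^{\mathbb{F}_{r}}$ is checked on cylinder events using the independence across well-separated blocks that comes from the i.i.d.\ structure of $\kappa^{\mathbb{F}_{r}}$.

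I expect the crux to be the interaction of exactness with monotonicity in steps (ii)--(iii): producing the output with the \emph{precise} target law $\iota^{\mathbb{F}_{r}}$ --- not a close approximation --- while simultaneously enforcing $Y\le X$ pointwise, staying equivariant, and absorbing the entropy surplus $h(\iota)-h(\kappa)$ when it is positive. An approximate version would be comparatively routine; it is exactness that forces the explicit measure-preserving encodings, the careful matching of block-level randomness budgets, and --- because monotonicity rules out the naive ``extract-then-thin'' factorization --- the essential use of the tree amplification. The remaining points (treating the finitely many block types uniformly, dealing with the residual region near markers, and reducing a general pair $(p,q)$ to a convenient normalized one, e.g.\ by composing thinnings) should be routine by comparison.
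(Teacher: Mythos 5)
Your plan correctly identifies the architecture (markers, Ornstein--Weiss amplification, a coupling-based thinning) and the central obstruction --- that randomness extracted measurably from $X$ is not independent of $X$, so extraction and thinning must be entangled --- but it stops exactly at that obstruction and supplies no mechanism for overcoming it. That mechanism is the actual content of the theorem. The paper's resolution is a specific ``key coupling'': for large $n$ there is a \emph{monotone} coupling $\gamma$ of $\kappa^n$ and $\iota^n$ under which the two equiprobable input blocks $100^{n-2}$ and $010^{n-2}$ are both sent deterministically to $0^n$ (Lemma~\ref{fund}, proved by explicitly perturbing the independent thinning and rebalancing mass on nearby configurations). Because both special patterns collapse to the same output, the fair bit recording \emph{which} of the two occurred is independent of the entire output and of the configuration obtained by zeroing out the special blocks; only then can that bit be fed into an Ornstein--Weiss iteration (run on the set of special-block locations, using the $a$- and $b$-directions in the tree) and recycled as the auxiliary uniform field driving the thinning everywhere else, with exactness verified by the identity $\hat\Phi(X,\uc(X))=\hat\Phi(\bar X,\uc(X))\eqd\hat\Phi(\bar X,U')$. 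Your step (ii) asserts that the amplification ``genuinely consumes the block's configuration while leaving intact everything $\phi$ still needs,'' but that is precisely the claim requiring construction and proof, and nothing in the plan indicates how to achieve it compatibly with monotonicity.

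A secondary gap is the marker scheme itself. You propose cutting $\mathbb{F}_r$ equivariantly into finite blocks of controlled size and finitely many combinatorial types via a Borel--Cantelli/multi-scale argument; on a non-amenable group this is delicate and is not needed. The paper instead partitions $\mathbb{F}_r$ into the $\mathbb{Z}$-cosets $Z(w)=\{wb^i\}_{i\in\mathbb{Z}}$ and runs an ordinary one-dimensional Keane--Smorodinsky marker--filler decomposition on each line, reserving the tree structure (both generators $a$ and $b$) solely for the Ornstein--Weiss amplification step. Your observation about the entropy regime $p+q>1$ is correct and well placed, but as written the proposal is a statement of intent rather than a proof: the ``crux'' you flag is left entirely open.
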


Theorem \ref{group} does not hold with such generality  in the case of a Bernoulli shift over an amenable group like the integers.     Recall that the \dff{entropy} of a probability measure $\kappa$ on a finite set $K$ is given by
$$ H(\kappa) :=  -\sum_{i \in K} \kappa(i) \log \kappa(i).$$

\begin{theorem}[Ball \cite{Ball}, Soo \cite{Soo2016}]
	\label{KS}
	  Let $\kappa$ and $\iota$ be probability measures on $\ns{0,1}$ and  $\iota$ be of lower intensity.  For Bernoulli shifts over the integers, there exists an equivariant thinning from $\kappa$ to $\iota$ if and only if $H(\kappa) \geq H(\iota)$.   	
\end{theorem}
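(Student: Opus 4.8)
For the necessity of $H(\kappa)\ge H(\iota)$ I would argue abstractly: an equivariant thinning $\phi$ exhibits the Bernoulli shift $\iota^{\mathbb Z}$ as a factor of $\kappa^{\mathbb Z}$, and over the integers Kolmogorov--Sinai entropy is monotone non-increasing under factor maps; since $h(\kappa^{\mathbb Z})=H(\kappa)$ and $h(\iota^{\mathbb Z})=H(\iota)$ by Kolmogorov's computation, the inequality follows. (If one prefers to avoid quoting entropy theory, the same conclusion comes from a Shannon--McMillan--Breiman count after approximating $\phi$ by a finitary factor: in a window of length $n$ the image takes $\lesssim e^{nH(\kappa)}$ values while a Bernoulli$(\iota(1))$ field needs $\approx e^{nH(\iota)}$ of them.) So all of the work is in sufficiency.

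Assume now $\iota(1)<\kappa(1)$ and $H(\kappa)\ge H(\iota)$, and set $p=\iota(1)/\kappa(1)\in(0,1)$. A thinning is the same thing as an equivariant stationary self-coupling $(x,\phi(x))$ of $\kappa^{\mathbb Z}$ with $\iota^{\mathbb Z}$ that dominates coordinatewise and in which the second coordinate is a function of the first. One cannot simply use the product ``monotone coupling'' $\rho$ on $\{0,1\}^2$ given by $\rho(0,0)=\kappa(0)$, $\rho(1,0)=\kappa(1)-\iota(1)$, $\rho(1,1)=\iota(1)$: its base entropy is $H(\kappa)+\kappa(1)\big(-p\log p-(1-p)\log(1-p)\big)>H(\kappa)$, so $\rho^{\mathbb Z}$ cannot be a factor of $\kappa^{\mathbb Z}$. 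Instead the randomness already carried by $x$ must be recycled to make the deletion decisions, and this is feasible in an entropy-budget sense exactly because the target process consumes only $H(\iota)\le H(\kappa)$ per site. The plan is the marker/filler (Keane--Smorodinsky) method.

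Concretely, I would first fix a long word of positive $\kappa$-probability that does not overlap itself, so that its occurrences in $x$ form an equivariant, positive-density renewal set; these occurrences cut $\mathbb Z$ into finite \emph{blocks}, and the renewal structure together with the $x$-values on the non-marker (``filler'') coordinates of the blocks re-encodes $\kappa^{\mathbb Z}$ losslessly --- the usual description of a Bernoulli shift as a Kakutani--Abramov tower over its induced renewal base. Second, on each block I would use the filler bits, via an explicit enumeration of patterns, to select which of that block's $1$'s to retain, arranging the enumeration so that after the coding the pair (block shape, retained pattern) has exactly the law of (a block drawn from the renewal base) $\times$ (an i.i.d.\ Bernoulli$(\iota(1))$ labelling of that block's sites). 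Feeding the retained pattern back through the same tower then yields an i.i.d.\ Bernoulli$(\iota(1))$ field on $\mathbb Z$; since the retained sites form a subset of the $1$'s of $x$, the resulting equivariant map $\phi$ is a thinning. The hypothesis $H(\kappa)\ge H(\iota)$ is precisely what makes the second step possible: a length-$n$ block carries roughly $nH(\kappa)$ units of entropy, which must pay both for the description of its shape (absorbed into the base) and for the $\approx nH(\iota)$ units needed for the $\iota$-labelling, all correct up to lower-order terms.

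The main obstacle will be exactness together with the control of those lower-order terms. Because the blocks are random, because the marker bits within a block are forced (so a length-$n$ block in fact carries somewhat fewer than $nH(\kappa)$ free nats), and because codebook sizes will not divide evenly, producing the output \emph{exactly} --- not merely approximately --- as the product measure $\iota^{\mathbb Z}$ forces a two-tier scheme: a clean enumeration on a ``good'' set of blocks, and a separately designed patching rule on a rare ``bad'' set of atypically shaped or arithmetically awkward blocks, the bad set having negligible mass and being handled so as still to respect $\phi(x)\le x$ and equivariance, with Borel--Cantelli estimates ensuring $\phi$ is defined almost everywhere. Getting all of these corrections to cancel so that the answer is precisely $\iota^{\mathbb Z}$, while staying equivariant, is where the real effort lies; the necessity direction and the leading-order entropy accounting are routine by comparison.
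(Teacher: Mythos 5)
Your necessity argument is exactly the one the paper indicates (entropy of a factor cannot exceed the entropy of the source over $\Z$, plus Kolmogorov's computation of $h(\kappa^{\Z})=H(\kappa)$), so that half is fine. But be aware that the paper does not prove this theorem at all: it is quoted from Ball \cite{Ball} (strict inequality) and Soo \cite{Soo2016} (equality), so the real content is the sufficiency direction, and there your proposal is a plan rather than a proof. You yourself flag that making the output \emph{exactly} $\iota^{\Z}$, monotonically and equivariantly, ``is where the real effort lies''; that is precisely the content of the cited papers, and nothing in your sketch supplies it.

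Two concrete gaps. First, your entropy accounting is built on having slack: block shapes, forced marker bits, codebooks that do not divide evenly, and a ``bad set'' of blocks are all to be absorbed into lower-order terms. That can only work when $H(\kappa)>H(\iota)$; the theorem also asserts sufficiency when $H(\kappa)=H(\iota)$, where there is no surplus to pay for any overhead. That case is not a refinement of the same scheme --- it required Soo's separate construction of an isomorphism that is simultaneously a thinning, which is why the theorem carries two citations. Second, even in the strict-inequality case, the ``patching rule on the rare bad set'' is not a routine Borel--Cantelli cleanup as it would be for an ordinary finitary factor: every correction must be a function of $x$ alone (no auxiliary randomness exists), must remain coordinatewise dominated by $x$, and must compensate the distribution exactly, block structure being itself a function of $x$. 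In the monotone setting the standard Keane--Smorodinsky devices (re-coding on a low-probability set, borrowing randomness from elsewhere) do not automatically respect $\phi(x)\le x$, and handling this is the technical heart of Ball's proof. So the proposal correctly identifies the strategy and the necessity direction, but the sufficiency direction remains unproved as written.
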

 
 In Theorem \ref{KS}, the necessity of $H(\kappa) \geq H(\iota)$ follows easily from the classical theory of Kolmogorov-Sinai entropy  \cite{MR2342699,MR0304616}, which we now recall.    Let $G$ be a group and let  $\kappa$ and $\iota$ be probability measures on a finite set $K$.      An equivariant map $\phi$ is a \dff{factor} from $\kappa$ to $\iota$ if the push-forward of $\kappa^G$ under $\phi$ is $\iota^G$, and 
 is an \dff{isomorphism} if $\phi$ is a bijection and its inverse also serves as a  factor from $\iota$ to $\kappa$.     In the case $G = \Z$, Kolmogorov proved that entropy is non-increasing under factor maps; this implies the necessity of $H(\kappa) \geq H(\iota)$ in Theorem \ref{KS}.  
 Furthermore,    Sinai \cite{MR2766434} proved that there is  a factor from $\kappa$ to $\iota$ if  $H(\kappa) \geq H(\iota)$, and Ornstein \cite{Ornstein}  proved there is an isomorphism from $\kappa$ to $\iota$ if and only if $H(\kappa) = H(\iota)$.    Thus entropy is a complete invariant for Bernoulli shifts over $\Z$.     Ornstein and Weiss \cite{MR910005} generalized these results to the case where $G$ is an amenable group.    See also Keane and 
 Smorodinsky 
 for concrete constructions of factor maps and isomorphisms \cite{keanea, keaneb}.   
 
The sufficiency of $H(\kappa) > H(\iota)$ in Theorem \ref{KS} was first proved by Ball \cite{Ball}.
  The existence of an isomorphism that is also an 
  equivariant
  thinning in the equal entropy case was proved by Soo \cite{Soo2016}.     Let us remark that the factor maps given in standard proofs of the   Sinai and Ornstein theorems will not in general be monotone; that is, they may not satisfy $\phi(x)(i )\leq x(i)$ for all $x \in \ns{0,1}^{\Z}$ and $i \in \Z$.     
 
 Towards the end of their 1987 paper, Ornstein and Weiss \cite{MR910005} give a simple
  but remarkable example of an entropy increasing factor in the case where $G$ is the free group of rank at least two,  which is further elaborated upon by  Ball \cite{ball3}.   It was an open question until recently whether all Bernoulli shifts over a free group of rank at least two are isomorphic.   This question was answered negatively by Lewis Bowen \cite{Lbowen} in 2010, who proved that although entropy can increase under factor maps, in the context of a free group with rank at least two, it is still a complete isomorphism invariant.     Recently, there has been much interest in studying factors in the non-amenable setting; see Russell  Lyons \cite{iidtrees} for more information.

  Our proof of Theorem \ref{group}  will make use of a variation of the  Ornstein and Weiss example in Ball \cite{ball3} and a primitive version of a  marker-filler type  construction, in the sense of Keane and Smorodinsky \cite{keanea, keaneb}.    Our construction uses randomness already present in the process in a  careful way as to mimic a construction that one would make if additional
  independent randomization 
  were  
  available.  This  approach was 
  taken by Holroyd, Lyons, and Soo \cite{MR2884878}, Angel, Holroyd, and Soo \cite{MR2736350}, and Ball \cite{MR2133893} for defining equivariant thinning in  the context of Poisson  point processes.

\section{Tools}

\subsection{Coupling}

Let $(A, \alpha)$ and $(B, \beta)$ be probability spaces.  A \dff{coupling} of $\alpha$ and $\beta$ is a probability measure on the product space $A \times B$  which has $\alpha$ and $\beta$ as its marginals.   For a random variable $X$, we will refer to the measure $\P(X \in \cdot)$ as the \dff{law} or the  \dff{distribution} of $X$. If two random variables $X$ and $Y$ have the same law, we write $X \eqd Y$.     Similarly, a \dff{coupling} of random variables $X$ and $Y$ is a pair of random variables $(X', Y')$, where $X'$ and $Y'$ are  defined on the same probability space and have the same law as $X$ and $Y$, respectively.  Thus a  coupling of random variables gives a coupling of the laws of the random variables.  Often we will refer to the law of a pair of random variables as the \dff{joint distribution} of the random variables.      In the case that $A=B$ and $A$ is a partially ordered by the relation $\preceq$, we say that a coupling $\gamma$ is \dff{monotone} if $\gamma\ns{(a,b) \subset A \times A :  b \preceq a )} = 1$. 
We will always endow  the space of binary sequences $\ns{0,1}^I$ indexed by a set $I$ with the partial order $x \preceq y$ if and only if $x_i \leq y_i$ for $i \in I$.

\begin{example}[Independent thinning]
	\label{indepthin}
	Let $\kappa$ and $\iota$ be probability measures on $\ns{0,1}$, where $\kappa(1) := p \geq \iota(1) :=q$.  Let $r := \tfrac{p-q}{p}.$     Then the measure $\rho$ on $\ns{0,1} ^2$ given by  
	$$\rho(0,0) = 1-p, \  \rho(0,1)=0,  \   \rho(1,0) =  rp, \text{ and }  \rho(1,1) = (1-r)p$$ 
	is a monotone coupling of $\kappa$ and $\iota$.  Thus under $\rho$, a  $1$ is thinned to a $0$ with probability $r$ and kept with probability $1-r$.     
	Clearly, the product measure $\rho^n$ is a  monotone coupling of $\kappa^n$ and $\iota^n$.  We will refer to the coupling $\rho^n$ as the \dff{independent thinning of $\kappa^n$ to $\iota^n$}.    \erk

\end{example}

The following simple lemma is one of the main ingredients  in the proof of  Theorem \ref{group}. In it we construct a coupling of  $\kappa^n$ and $\iota^n$ for $n$ sufficiently large which will allow us to extract spare randomness from a related coupling of $\kappa^G$ and $\iota^G$.
We will write $0^n1^m$ to 
indicate 
the binary sequence of length $n+m$ of $n$ zeros followed by $m$ ones.

\begin{lemma}[Key coupling]
\label{fund}	
	   Let $\kappa$ and $\iota$ be probability measures on $\ns{0,1}$, where $\kappa$ is of greater intensity.      For $n$ sufficiently large, there exists a monotone coupling $\gamma$ of $\kappa^n$ and $\iota^n$ such that
	$$\gamma(100^{n-2}, 0^n) = \kappa^n(100^{n-2})$$
	and  $$\gamma(010^{n-2}, 0^n) = \kappa^n(010^{n-2}).$$
\end{lemma}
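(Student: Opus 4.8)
The plan is to \emph{reserve}, inside the coupling, the whole $\kappa^n$-mass carried by the two source configurations $100^{n-2}$ and $010^{n-2}$, funnel all of it to the all-zeros configuration $0^n$, and then couple whatever is left over. Write $p := \kappa(1) > \iota(1) =: q$, let $r := (p-q)/p$ as in Example~\ref{indepthin}, and set $c_n := \kappa^n(100^{n-2}) = \kappa^n(010^{n-2}) = p(1-p)^{n-1}$. I would introduce the leftover measures
$$\mu' := \kappa^n - c_n\bigl(\delta_{100^{n-2}} + \delta_{010^{n-2}}\bigr), \qquad \nu' := \iota^n - 2c_n\,\delta_{0^n},$$
which are nonnegative and have equal total mass $1 - 2c_n$ as soon as $2c_n \le (1-q)^n = \iota^n(0^n)$. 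If $\gamma'$ is any monotone coupling of $\mu'$ and $\nu'$, then $\gamma := \gamma' + c_n\,\delta_{(100^{n-2},\,0^n)} + c_n\,\delta_{(010^{n-2},\,0^n)}$ is a monotone coupling of $\kappa^n$ and $\iota^n$; moreover $\mu'$ assigns zero mass to the rows $100^{n-2}$ and $010^{n-2}$, so $\gamma'$ does too, giving $\gamma(100^{n-2},0^n) = c_n = \kappa^n(100^{n-2})$ and $\gamma(010^{n-2},0^n) = \kappa^n(010^{n-2})$. Thus the whole problem reduces to producing the monotone coupling $\gamma'$.

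To build $\gamma'$ I would invoke Strassen's theorem (finite-poset form): a monotone coupling of $\mu'$ and $\nu'$ exists if and only if $\mu'(U) \ge \nu'(U)$ for every up-set $U \subseteq \ns{0,1}^n$. To check this I would bring in the independent thinning $\rho^n$ of $\kappa^n$ to $\iota^n$ from Example~\ref{indepthin}: let $(X,Y) \sim \rho^n$, so $Y \preceq X$, $X \sim \kappa^n$, $Y \sim \iota^n$, and the coordinates of $Y$ are i.i.d.\ with $\P(Y_i = 0) = 1 - q$. Fix an up-set $U$. If $0^n \in U$ then $U = \ns{0,1}^n$ and $\mu'(U) = \nu'(U) = 1 - 2c_n$. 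If $0^n \notin U$, then $\nu'(U) = \iota^n(U) = \P(Y \in U)$, and since $U$ is increasing with $Y \preceq X$ we get $\mu'(U) - \nu'(U) = \P(X\in U,\, Y\notin U) - c_n\bigl(\mathbf{1}[100^{n-2}\in U] + \mathbf{1}[010^{n-2}\in U]\bigr)$, which is nonnegative unless one of the two configurations lies in $U$. If $100^{n-2}\in U$, then $U$ contains the principal up-set $\ns{x : x_1 = 1}$, so the event $\ns{X_1 = 1,\ Y = 0^n}$, of probability $pr(1-q)^{n-1}$, is contained in $\ns{X\in U,\ Y\notin U}$; hence $\P(X\in U, Y\notin U) \ge pr(1-q)^{n-1}$, and the cases $010^{n-2}\in U$ and ``both'' are handled identically using $\ns{X_2 = 1,\ Y = 0^n}$.

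I expect the one genuine obstacle to be the exponential comparison that remains: one needs $pr(1-q)^{n-1} \ge 2c_n = 2p(1-p)^{n-1}$, equivalently $r\bigl((1-q)/(1-p)\bigr)^{n-1} \ge 2$, and this is precisely where ``$n$ sufficiently large'' is used, since $q < p$ makes $(1-q)/(1-p) > 1$. The same choice of $n$ also guarantees $2c_n \le (1-q)^n$ (indeed $pr(1-q)^{n-1} \le (1-q)^n$), so $\nu'$ is a genuine measure. Conceptually: the slack one must spend to reserve the two configurations is of order $(1-p)^n$, while the slack available at the bottom of the order --- accessible to every nonempty up-set through the ``thin everything'' event --- is of the larger order $(1-q)^n$, so for $n$ large the reservation is free. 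The remaining verifications (the marginals and support of $\gamma$, the trivial up-set cases) are routine. Alternatively, one could bypass Strassen and construct $\gamma$ directly as a local modification of $\rho^n$, pushing the mass $c_n(1-r)$ at $(100^{n-2},100^{n-2})$ and at $(010^{n-2},010^{n-2})$ down to column $0^n$ and then restoring the $\iota^n$-marginal by redirecting equal mass from rows $x$ with $x_1 = 1$ (resp.\ $x_2 = 1$) away from column $0^n$ to column $100^{n-2}$ (resp.\ $010^{n-2}$); the same exponential comparison makes this legitimate once $n$ is large.
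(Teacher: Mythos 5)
Your proof is correct, but your main route is genuinely different from the paper's. The paper constructs $\gamma$ explicitly: starting from the independent thinning $\rho^n$ of Example \ref{indepthin}, it moves the diagonal mass $p(1-p)^{n-1}(1-r)$ sitting at $(100^{n-2},100^{n-2})$ and at $(010^{n-2},010^{n-2})$ down to column $0^n$, and then repairs the second marginal by shifting mass $\tfrac{p(1-p)^{n-1}(1-r)}{n-2}$ from $(x,0^n)$ to $(x,100^{n-2})$ for each of the $n-2$ rows $x$ with $x_1=1$, $x_2=0$ and exactly one further $1$ (symmetrically for $010^{n-2}$); the only estimate is nonnegativity of the adjusted entries, $(n-2)\,p^2(1-p)^{n-2}r^2 > p(1-p)^{n-1}(1-r)$, which wins by a factor growing linearly in $n$ at the same exponential rate $(1-p)^n$, whereas your estimate exploits the exponential gap between $(1-q)^{n}$ and $(1-p)^{n}$. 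You instead peel off the two reserved atoms, reduce to a monotone coupling of the equal-mass leftovers $\mu',\nu'$, and obtain its existence from Strassen's theorem on the finite poset $\ns{0,1}^n$, using the independent thinning only as a certificate for the up-set inequality (your single-event bound $pr(1-q)^{n-1}\ge 2c_n$ correctly covers the case where both reserved configurations lie in $U$, which is the case forcing the factor $2$). Your approach is shorter and isolates exactly what ``$n$ sufficiently large'' must buy; its cost is non-constructiveness, which is harmless here since the lemma is only consumed through Lemma \ref{function}, which needs only the joint law, and Strassen is in any case quoted later in the paper. Two small points to tidy: apply Strassen after normalizing $\mu'$ and $\nu'$ by $1-2c_n$ (or cite a version for equal-mass measures), and record that nonnegativity of $\nu'$, i.e.\ $2c_n\le(1-q)^n$, follows from the same inequality as you note. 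Your closing ``alternative'' sketch is essentially the paper's actual proof.
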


\begin{proof}
	Let $p=\kappa(1)$, $q= \iota(1)$, and $\rho^n$ be the independent thinning of $\kappa^n$ to $\iota^n$ as in Example \ref{indepthin}.   We will perturb  $\rho^n$ to give the required coupling.  We specify a probability  measure $\varrho$ on $\ns{0,1} ^n \times \ns{0,1}^n$ by stating that it agrees with $\rho^n$ except on the points $(100^{n-2}, 0^n)$,   $(010^{n-2}, 0^n)$,  $(100^{n-2}, 100^{n-2})$, and $(010^{n-2}, 010^{n-2})$, where we specify that
$$\varrho(100^{n-2}, 0^n  )  = \varrho(010^{n-2}, 0^n )  = p(1-p)^{n-1}      $$
and
$$ \varrho(100^{n-2}, 100^{n-2}) = \varrho(010^{n-2}, 010^{n-2}) = 0.$$
Thus $\varrho$ is almost a monotone coupling of $\kappa^n$ and $\iota^n$, except that from our changes to $\rho^n$ we have
\begin{equation*}\begin{split}  
	\sum_{x \in \ns{0,1} ^n} \varrho(x, 0^n) & = \sum_{x \in \ns{0,1} ^n} \rho^n(x, 0^n) -\rho^n(100^{n-2},0^n)-\rho^n(010^{n-2},0^n)\\
	& \qquad\qquad +\varrho(100^{n-2},0^n)+\varrho(010^{n-2},0^n)\\
	& = (1-q)^n + 2p(1-p)^{n-1}(1-r),
\end{split}\end{equation*}
and 
\begin{equation*}\begin{split}  
\sum_{x \in \ns{0,1} ^n} \varrho(x, 100^{n-2}) & = \sum_{x \in \ns{0,1} ^n} \rho^n(x, 100^{n-2}) -\rho^n(100^{n-2},100^{n-2})\\
&\qquad\qquad +\varrho(100^{n-2},100^{n-2})\\
& = q(1-q)^{n-1}  -  p(1-p)^{n-1}(1-r)+0\\
&= \sum_{x \in \ns{0,1} ^n} \varrho(x, 010^{n-2}),
\end{split}\end{equation*}
where $r = \tfrac{p-q}{p}$.

We perturb $\varrho$ to obtain the desired coupling $\gamma$.  Consider the set $B_1$ of all binary sequences of length $n$, where $x \in B_1$ if and only if $x_1=1$, $x_2=0$, and  $\sum_{i=3}^n x_i = 1$.    
Similarly, let $B_2$ be the set of all binary sequences of length $n$, where $x \in B_2$ if and only if $x_1=0$, $x_2=1$, and  $\sum_{i=3}^n x_i = 1$.  The sets $B_1$ and $B_2$ are disjoint, and each have cardinality $n-2$.   

 For $x \in B_1 \cup B_2$, 
$$\varrho(x, 0^n) = \rho^n(x, 0^n) =  p^2(1-p)^{n-2}r^2,$$
for $x \in B_1$,
$$\varrho(x, 100^{n-2}) = \rho^n(x, 100^{n-2})= p^2(1-p)^{n-2}r(1-r),$$
and
for $x \in B_2$,
$$\varrho(x, 010^{n-2}) = \rho^n(x, 010^{n-2})= p^2(1-p)^{n-2}r(1-r).$$
Note that for $n$ sufficiently large 
$$\sum_{x \in B_1 \cup B_2}\varrho(x, 0^n) =  2(n-2)  p^2(1-p)^{n-2}r^2   >2p(1-p)^{n-1}(1-r).$$

Let $\gamma$ be equal to $\varrho$ except on the set of points 
$$\ns{(x,0^n) : x \in B_1 \cup B_2}  \cup \ns{(x, 100^{n-2}):  x \in B_1}  \cup \ns{(x, 010^{n-2}): x \in B_2  },$$ where we make the following adjustments.  
For $x \in B_1 \cup B_2$, set  
$$\gamma(x, 0^n) =  p^2(1-p)^{n-2}r^2 - \frac{p(1-p)^{n-1}(1-r)}{n-2} >0,$$
 for $x \in B_1$, set 
$$\gamma(x, 100^{n-2}) =  p^2(1-p)^{n-2}(1-r)r + \frac{p(1-p)^{n-1}(1-r)}{n-2},$$
and for $x \in B_2$, set 
$$\gamma(x, 010^{n-2}) =  p^2(1-p)^{n-2}(1-r)r + \frac{p(1-p)^{n-1}(1-r)}{n-2}.$$
That $\gamma$ has the required properties follows from its construction.
\end{proof}

To illustrate the utility of Lemma \ref{fund}, we will give a different proof of the following result of Peled and Gurel-Gurevich \cite{GGP}.    Let $\N = \ns{0, 1, 2, \ldots}$.  

\begin{theorem}[Peled and Gurel-Gurevich \cite{GGP}]
	\label{thick}
	Let $\kappa$ and $\iota$ be probability measures on $\ns{0,1}$, where $\kappa$ is of greater intensity.   There exists a measurable map $\phi: \ns{0,1}^{\N} \to \ns{0,1}^{\N}$ such that the push-forward of $\kappa^{\N}$ under $\phi$ is $\iota^{\N}$ and  $\phi(x)(i) \leq x(i)$ for all $x \in \ns{0,1}^{\N}$ and all $i \in \N$.  \end{theorem}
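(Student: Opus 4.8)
The plan is to deduce the theorem from two ingredients built out of the key coupling of Lemma~\ref{fund}: a ``thinning with an external seed'' map, and an extractor that manufactures, from the input alone, a uniform random variable independent of the thinned output. One then chains these across a partition of $\N$ into infinitely many infinite pieces, using the output of the extractor on one piece as the seed for the thinning of the previous piece.

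Concretely, I would proceed as follows. Fix $n$ large enough for Lemma~\ref{fund}, and realise the coupling $\gamma$ as a measurable map $\tilde\gamma\colon\ns{0,1}^n\times[0,1]\to\ns{0,1}^n$, so that for $Z$ with law $\kappa^n$ and $U$ an independent uniform on $[0,1]$ the pair $(Z,\tilde\gamma(Z,U))$ has law $\gamma$; the point is that since $\gamma$ sends each of $100^{n-2}$ and $010^{n-2}$ deterministically to $0^n$, the map $\tilde\gamma$ ignores its second argument on those two inputs. Next, cut $\N$ into consecutive blocks of length $n$, split a single uniform seed $w$ into a sequence of independent uniform seeds, one per block, and let $\Phi(x,w)$ be the configuration whose $j$th block is $\tilde\gamma$ applied to the $j$th block of $x$ with the $j$th seed; because $\gamma$ is a monotone coupling of $\kappa^n$ and $\iota^n$, we get $\Phi(x,w)\preceq x$ for all $x,w$, and for $X$ with law $\kappa^{\N}$ and $W$ an independent uniform, $\Phi(X,W)$ has law $\iota^{\N}$ with independent blocks. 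Then define the extractor $g\colon\ns{0,1}^{\N}\to[0,1]$ by scanning the blocks of $x$ in order and recording a $0$ for each block equal to $100^{n-2}$ and a $1$ for each block equal to $010^{n-2}$; almost surely infinitely many blocks are of these two types (each has probability $p(1-p)^{n-1}$, writing $p=\kappa(1)$), so the recorded bits assemble into a uniform random variable $g(X)$. The crucial step is to verify that $g(X)$ is independent of $\Phi(X,W)$: conditionally on the set of ``special'' block positions, on the input values away from those positions, and on $W$, the configuration $\Phi(X,W)$ is fixed --- the special blocks contribute $0^n$ no matter what --- whereas the recorded bits form, conditionally, an i.i.d.\ fair sequence whose law does not depend on the conditioning, so $g(X)$ is independent of $\Phi(X,W)$. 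Finally, partition $\N=\bigsqcup_{k\ge1}B_k$ into infinitely many infinite pieces, each order-isomorphic to $\N$, and let $\phi(x)$ be the configuration whose restriction to $B_k$ equals $\Phi\bigl(x|_{B_k},\,g(x|_{B_{k+1}})\bigr)$. Then $\phi(x)\preceq x$; each $\phi(X)|_{B_k}$ has law $\iota^{B_k}$ since $g(X|_{B_{k+1}})$ is a uniform independent of $X|_{B_k}$; and the pieces $\phi(X)|_{B_k}$ are jointly independent, because $\Phi(X|_{B_k},g(X|_{B_{k+1}}))$ is independent of $g(X|_{B_k})$ (the crucial step applied inside $B_k$) and $X|_{B_k},X|_{B_{k+1}}$ are independent of $X|_{B_1},\dots,X|_{B_{k-1}}$, which by induction gives $\phi(X)|_{B_k}\perp(\phi(X)|_{B_1},\dots,\phi(X)|_{B_{k-1}})$ for every $k$. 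Hence $\phi(X)$ has law $\iota^{\N}$.

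The step I expect to be the main obstacle, and the one place where the precise form of Lemma~\ref{fund} matters, is the construction of the extractor together with the independence $g(X)\perp\Phi(X,W)$: it works only because $100^{n-2}$ and $010^{n-2}$ are thinned to the \emph{same} output $0^n$, so that the label recording which of the two occurred is a fair coin leaving no trace in $\Phi(X,W)$. The remaining care is purely bookkeeping --- tracking exactly which blocks of the input each block of the output depends on --- which is what lets this one-bit-at-a-time harvesting device power the chaining argument and yield a genuine thinning of all of $\N$ at once.
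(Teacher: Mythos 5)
Your proposal is correct, and its engine is the same as the paper's: the key coupling of Lemma~\ref{fund}, the fact that the realizing map $\Gamma$ ignores its seed on the two special words $100^{n-2}$ and $010^{n-2}$, and the consequent independence of the harvested fair bits from the thinned output. Where you genuinely diverge is the final assembly. The paper does not partition $\N$ into infinitely many infinite pieces; it feeds the extracted randomness back into the very blocks it came from: writing $\bar X$ for $X$ with its special blocks zeroed and $b(X)$ for the harvested bit sequence, one has $b(X)\perp\bar X$, hence $(\bar X,\cg(b(X)))\eqd(\bar X,U)$, and since $\Gamma(\bar X^i,\cdot)=\Gamma(X^i,\cdot)$ on every block, the substitution of $\cg(b(X))$ for the external seed $U$ leaves the law of the output unchanged. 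Your chaining --- seeding each infinite piece $B_k$ from $B_{k+1}$ --- trades that one-line distributional identity for an induction on joint independence across pieces; it is a valid alternative that makes the ``no feedback into the output'' structure very explicit, at the cost of extra bookkeeping. Two details to tidy if you write it up: (i) you need the independence $g(X)\perp\Phi(X,w)$ for a seed $w$ that is fixed, or at least independent of $X|_{B_k}$, since in your chain the seed is $g(X|_{B_{k+1}})$ rather than an external uniform --- this does hold, because $\Phi(X,w)$ is a function of $\bar X$ and $w$ alone, and your conditional argument gives $g(X)\perp\bar X$; and (ii) $g$ and $\phi$ must be assigned default values on the null set of inputs with only finitely many special blocks (e.g.\ output $0^{\N}$, as the paper does), so that $\phi(x)(i)\leq x(i)$ holds for \emph{every} $x$, as the theorem demands.
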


We note that in \cite[Theorem 1.3]{GGP}, they use the dual terminology of {\em thickenings}; their equivalent theorem states that for probability measures   $\iota$ and $\kappa$  on $\ns{0,1}$, where $\iota$ is of lesser intensity, there is a measurable map $\phi: \ns{0,1}^{\N} \to \ns{0,1}^{\N}$ such that the push-forward of $\iota^{\N}$ under $\phi$ is $\kappa^{\N}$ and  $\phi(x)(i) \geq  x(i)$ for all $x \in \ns{0,1}^{\N}$ and all $i \in \N$.

In the proof of Theorem \ref{thick}, we will make use of the following two  lemmas.   
We say that a random variable $U$ is \dff{uniformly distributed} in $[0,1]$ 
if the probability that $U$ lies in a Borel subset of the unit interval is given by the Lebesgue measure of the set.
   
\begin{lemma}
	\label{function}
Let $(X, Y)$ be a pair of  discrete random variables taking values on the finite set $A \times B$ with joint distribution $\gamma$.   There exists a measurable function $\Gamma:A \times [0,1] \to B$ such that if $U$ is uniformly distributed 
in 
$[0,1]$ and independent of $X$,  then $(X, \Gamma(X, U))$ has joint distribution $\gamma$.  
\end{lemma}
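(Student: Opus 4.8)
The plan is to build $\Gamma$ as a deterministic "inverse transform sampling" map, one fiber at a time. Fix $a \in A$ with $\P(X = a) > 0$ (if $\P(X=a)=0$, define $\Gamma(a,\cdot)$ arbitrarily). On this fiber we want the conditional law of $\Gamma(a,U)$ given $X = a$ to equal the conditional law $\gamma(b \mid a) := \gamma(a,b)/\P(X=a)$ on $B$. First I would fix an enumeration $b_1, b_2, \dots, b_{|B|}$ of $B$ and set the partial sums $s_0(a) = 0$ and $s_j(a) = \sum_{i \le j} \gamma(b_i \mid a)$, so that $s_{|B|}(a) = 1$. Then define $\Gamma(a, u) = b_j$ whenever $u \in [s_{j-1}(a), s_j(a))$, with the right endpoint $u = 1$ assigned to $b_{|B|}$. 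This prescription defines $\Gamma$ on all of $A \times [0,1]$, and it is measurable because for each fixed $a$ it is a step function in $u$ with finitely many pieces, and $A$ is a finite (hence discrete) space, so measurability in the pair $(a,u)$ is automatic — $\Gamma^{-1}(b_j)$ is a finite union of product sets $\{a\} \times [s_{j-1}(a), s_j(a))$.

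Next I would verify the distributional claim. Suppose $U$ is uniform on $[0,1]$ and independent of $X$. For any $a \in A$ with $\P(X=a)>0$ and any $b_j \in B$,
$$
\P\bigl(X = a,\ \Gamma(X,U) = b_j\bigr) = \P\bigl(X = a,\ U \in [s_{j-1}(a), s_j(a))\bigr) = \P(X=a)\cdot\bigl(s_j(a) - s_{j-1}(a)\bigr),
$$
using independence of $U$ and $X$ together with the fact that the Lebesgue measure of $[s_{j-1}(a), s_j(a))$ is exactly $s_j(a) - s_{j-1}(a) = \gamma(b_j \mid a)$. Hence this probability equals $\P(X=a)\cdot\gamma(b_j \mid a) = \gamma(a, b_j)$, which is precisely the joint distribution we wanted. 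For $a$ with $\P(X=a)=0$ the joint mass $\gamma(a,b_j)$ is $0$ and both sides vanish regardless of how $\Gamma(a,\cdot)$ was chosen. Summing over $a$ and $b_j$ shows $(X, \Gamma(X,U))$ has joint law $\gamma$.

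There is no real obstacle here; the only points requiring a modicum of care are (i) handling fibers of zero probability, which is cosmetic, and (ii) confirming joint measurability of $\Gamma$ on $A \times [0,1]$, which is immediate once one observes that $A$ is finite so one can treat each fiber separately and take finite unions. The finiteness of $A$ and $B$ makes everything elementary — no regularity of conditional distributions is needed beyond the discrete definition. One could phrase the construction slightly more compactly by writing $\Gamma(a,u) = b_j$ iff $j = \min\{ k : s_k(a) > u \}$ (with the convention that $u=1$ maps to $b_{|B|}$), but the step-function description above is the cleanest for checking both measurability and the distributional identity.
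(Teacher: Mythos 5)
Your proposal is correct and matches the paper's argument: the paper also defines $\Gamma(a,u)=b_j$ on the interval between consecutive partial sums $q_a(j)$ of the conditional probabilities $\P(Y\in\{b_1,\dots,b_j\}\mid X=a)$, i.e.\ the same inverse-transform construction. Your extra remarks on zero-probability fibers and joint measurability are minor refinements of what the paper leaves implicit (the paper simply assumes $\P(X=a)>0$ for all $a$).
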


\begin{proof}   Assume that $\P(X = a) >0$, for all $a \in A$.   Let $B = \ns{b_1 , \ldots, b_n}$.        For each $a \in A$, let 
	$$q_a(j):= \P(Y \in \ns{b_1, \ldots, b_j} | X=a) = \frac{\P  (Y \in \ns{b_1, \ldots, b_j} , X=a)}{  \P( X=a)  }$$ for all $1 \leq j \leq n$.   Set $q_a(0) =0$ and note that $q_a(n) =1$,  so that 
	$$\P(q_a(j-1) \leq U < q_a(j)    ) = \frac{\P(Y=b_j, X=a)}{\P(X=a)}.$$       For each $ 1\leq j \leq n$, let   
	\begin{equation*}
	\Gamma(a, u) := b_j \text{ if } q_a(j-1) \leq u < q_a(j)  . 
\qedhere
\end{equation*}
%
\end{proof}

 We call a  $\ns{0,1}$-valued random variable a \dff{Bernoulli random variable}.   The following lemma allows us to code sequences of independent coin-flips into sequences of uniformly distributed random variables.  

\begin{lemma}
	\label{rep}
	There exists a measurable  function $\cg: \ns{0,1}^{\N} \to [0,1]^{\N}$ such that if $B =(B_{i})_{i \in \N}$ is a sequence of i.i.d.\ Bernoulli random variables with mean $\tfrac{1}{2}$, then $(\cg(B)_i)_{i \in \N}$ is a sequence of  i.i.d.\ random variables that are uniformly distributed in $[0,1]$.	
\end{lemma}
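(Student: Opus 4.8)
The plan is to use disjoint blocks of the input bits to manufacture, one bit at a time, the binary expansions of independent uniform random variables. Fix a partition $\N = \bigsqcup_{k \in \N} S_k$ of the index set into pairwise disjoint infinite subsets (for instance, group the indices $i$ according to the largest power of $2$ dividing $i+1$), and for each $k$ list $S_k = \ns{s_{k,1} < s_{k,2} < s_{k,3} < \cdots}$ in increasing order. Given $x \in \ns{0,1}^{\N}$, define
$$ \cg(x)_k := \sum_{j \in \N} x(s_{k,j}) \, 2^{-j} \in [0,1], \qquad k \in \N. $$
The claim will then be that this $\cg$ works.

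First I would check that $\cg$ is measurable. For each fixed $k$ and each $m \in \N$, the partial sum $x \mapsto \sum_{j=1}^{m} x(s_{k,j}) 2^{-j}$ depends on only finitely many coordinates of $x$ and is therefore measurable; letting $m \to \infty$ exhibits $x \mapsto \cg(x)_k$ as a pointwise limit of measurable functions, hence measurable. Since a map into a countable product is measurable precisely when each of its coordinate maps is, $\cg$ is measurable.

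Next I would identify the law of each coordinate. The key classical fact is that if $(C_j)_{j \in \N}$ is a sequence of i.i.d.\ Bernoulli random variables with mean $\tfrac12$, then $\sum_{j \in \N} C_j 2^{-j}$ is uniformly distributed in $[0,1]$: for every dyadic interval $[\ell 2^{-m}, (\ell+1)2^{-m})$, the event that this sum lies in the interval coincides, outside the null set of dyadic rationals, with the cylinder event that fixes the values of $C_1, \dots, C_m$, which has probability $2^{-m}$; as dyadic intervals generate the Borel $\sigma$-algebra of $[0,1]$, the pushforward is Lebesgue measure. Applying this with $C_j = B_{s_{k,j}}$ shows that $\cg(B)_k$ is uniformly distributed in $[0,1]$ for each $k$.

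Finally, I would establish independence: since the sets $S_k$ are pairwise disjoint and the coordinates $(B_i)_{i \in \N}$ are independent, the families $(B_i)_{i \in S_k}$, $k \in \N$, are mutually independent, and because $\cg(B)_k$ is a measurable function of $(B_i)_{i \in S_k}$ alone, the random variables $(\cg(B)_k)_{k \in \N}$ are mutually independent; combined with the previous paragraph this yields that $(\cg(B)_k)_{k \in \N}$ is an i.i.d.\ sequence of uniform $[0,1]$ random variables. There is no serious obstacle here; the only point demanding care is the computation that the dyadic-expansion map pushes the fair-coin measure forward to Lebesgue measure, and in particular the observation that the ambiguity of binary expansions at dyadic rationals concerns only a countable, hence null, set and so does not affect the distributional identity.
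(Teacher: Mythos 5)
Your proof is correct, but it is genuinely more explicit than the paper's, which simply invokes the Borel isomorphism theorem (citing Srivastava, Theorem 3.4.23) and gives no construction. Your argument is the standard concrete realization of that abstract fact: split $\N$ into countably many disjoint infinite index sets, read the bits along each set as the binary expansion of a number in $[0,1]$, check that the dyadic-expansion map pushes the fair-coin measure forward to Lebesgue measure (with the ambiguity at dyadic rationals dismissed as a null set), and get independence for free from the disjointness of the index sets. All three steps are handled correctly; the only blemish is a harmless indexing slip where you write $\sum_{j \in \N} x(s_{k,j})2^{-j}$ with $\N = \ns{0,1,2,\dots}$ but list $S_k$ starting from $s_{k,1}$ --- the subsequent partial sums $\sum_{j=1}^{m}$ and the probability $2^{-m}$ make clear you intend the sum to start at $j=1$, which is what keeps the value in $[0,1]$. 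What your approach buys is a self-contained, constructive proof requiring no external machinery; what the paper's citation buys is brevity and the reassurance that the map can be chosen Borel in a canonical way. Either is acceptable here, since the lemma is used only as a black box to convert spare coin flips into uniform labels.
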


\begin{proof}
	The result follows from the Borel isomorphism theorem.  See  \cite[Theorem 3.4.23]{borel} for more details.  
\end{proof}

\begin{proof}[Proof of Theorem \ref{thick}]  Let $\gamma$ be the monotone coupling of $\kappa^n$ and $\iota^n$ given by Lemma \ref{fund}, so that $\gamma$ is a measure on $\ns{0,1} ^n \times \ns{0,1}^n    \equiv (\ns{0,1} \times \ns{0,1})^n$.   Thus the product measure $\gamma^2$ is a monotone coupling of $\kappa^{2n}$ and $\iota^{2n}$ and $\gamma^{\N}$ gives a monotone coupling of $\kappa^{\N}$ and $\iota^{\N}$.    We will modify the coupling $\gamma^{\N}$ to become the required map $\phi$.  In order to do this, it will be
 easier to think in terms of random variables rather than measures.    
	
Let $X =(X_i)_{i \in \N}$ be an	i.i.d.\ sequence of Bernoulli random variables with mean $\kappa(1)$.  For each $j \geq 0$,    let $$X^{j} := (X_{jn}, \ldots, X_{(j+1)n -1}  ),$$ so that the random variables are partitioned into blocks of size $n$.  Let $U = (U_i)_{i \in \N}$ be an  i.i.d.\ sequence of random variables that are uniformly distributed in $[0,1]$.  Also assume that $U$ is independent of  $X$, and let $Y =(Y_i)_{i \in \N}$ be an	i.i.d.\ sequence of Bernoulli random variables with mean $\iota(1)$.

By Lemmas \ref{fund} and \ref{function},   let $\Gamma :\ns{0,1} ^n \times [0,1] \to \ns{0,1}^n$ be a measurable  map such that $(X^{1}, \Gamma(X^{1}, U_1))$ has joint law $\gamma$ and  $\Gamma(w, v)=0^n$ for all $v \in [0,1]$ if $w \in \ns{100^{n-2}, 010^{n-2}}$.     We have that 
   $$\big(X,  (\Gamma(X^i, U_i))_{i \in \N}  \big)$$ gives a monotone coupling of $X$ and $Y$ with law $\gamma^{\N}$.   
  
  For each $j \in \N$, call  $X^j$ \dff{special} if $X^{j} \in   \ns{100^{n-2}, 010^{n-2}}$ and let $S \subset \N$ be the random set of $j \in \N$ for which $X^j$ are special.  Note that almost surely, $S$ is an infinite set.    Let $\bar{X} = (\bar{X}_i)_{i \in \N}$ be the sequence of binary digits such that $\bar{X}^j = X^j$ if $j \not \in S$ and $\bar{X}^j = 0^n$ if $j \in S$.  We have that 
  $$ (\Gamma(X^i, U_i))_{i \in \N} =  (\Gamma(\bar{X}^i, U_i))_{i \in \N}.$$

    Let $(s_i)_{i \in \N}$ be the enumeration of $S$, where $s_0 < s_1 < s_2 < s_3  \cdots$.   Consider the sequence of random variables given by
  $$b(X) := (\mathbf{1}[X^{s_i}   =100^{n-2}   ])_{i \in \N} = (X_{s_i n})_{i \in \N}$$
  Since $100^{n-2}$ and $010^{n-2}$ occur with equal probability, we have that $b(X)$ is an i.i.d.\ sequence of Bernoulli random variables with mean $\tfrac{1}{2}$.  Furthermore, we have that $b(X)$ is independent of $\bar{X}$, since $b(X)$ only depends on the values of $X$ on the special blocks.   Let $\cg$ be the function from Lemma \ref{rep}, so that $\cg(b(X)) \eqd U$.    Since $b(X)$ is independent of $\bar{X}$, 
  %
  \begin{eqnarray*}
  [\Gamma(X^i, U_i)]_{i \in \N} &=&  [\Gamma(\bar{X}^i, U_i)]_{i \in \N}  \\
   &\eqd& \big[\Gamma(\bar{X}^i, \cg(b(X))_i)\big]_{i \in \N} \\
   &=& \big[\Gamma({X}^i, \cg(b(X))_i)\big]_{i \in \N}.
  \end{eqnarray*}
  Thus $\Big(X, \big[\Gamma({X}^i, \cg(b(X))_i)\big]_{i \in \N}\Big)$ is another  monotone coupling of $X$ and $Y$.  
  Hence, we define 
  $$\phi(x)  := \big[\Gamma({x^i}, \cg(b(x))_i  )\big]_{i \in \N }$$
  for all $x \in \ns{0,1}^{\N}$
  when  
  the set $S$ is infinite, and set $\phi(x)=0^{\N}$ when $S$ is finite--an event that occurs with probability zero. 
   \end{proof}

\subsection{Joinings}

Let $T: \ns{0,1}^{\Z} \to \ns{0,1}^{\Z}$ be the left-shift given by $(Tx)_i = x_{i+1}$ for all $x \in \ns{0,1}^{\Z}$ and all  $i \in \Z$.   Let $\kappa$ and $\iota$ be probability measures on $\ns{0,1}$.   A \dff{joining} of $\kappa^{\Z}$ and $\iota^{\Z}$ is a coupling $\varrho$ of the two measures with the additional property that  $\varrho \circ (T\times T) = \varrho$.    We will  make use of the following joining in the proof of Theorem \ref{group}.

\begin{example} \label{partition}
	Let $\kappa$ and $\iota$ be probability measures on $\ns{0,1}$.  Assume that the intensity of $\kappa$ is greater than the intensity of $\iota$.  
Let $x \in \ns{0,1}^{\Z} $, and let $n$ be sufficiently large as  in Lemma \ref{fund}.  Call the subset $[j, j+2n+1] \subset \Z$ a \dff{marker} if $x_i  =0$ for all $i \in [j, j+2n]$ and $x_{j+2n+1}=1$.  Notice that two distinct markers have an empty intersection.  Call an interval a \dff{filler} if it is nonempty and lies between two markers.  Thus each $x \in \ns{0,1}^{\Z}$ partitions $\Z$ into intervals of markers and fillers.   Call a filler \dff{fitted} if it is of size $n$, and call  a filler  \dff{special} if it is both fitted and of the form $100^{n-2}$ or $010^{n-2}$.

Let $X$ have law $\kappa^{\Z}$ and $Y$ have law $\iota^{\Z}$.  In what follows we describe explicitly how to obtain a monotone joining of $X$ and $Y$, where the independent   thinning is used everywhere, except at the fitted fillers, where the coupling from Lemma \ref{fund} is used.   
 Let $U=(U_i)_{i \in \Z}$ be an i.i.d.\ sequence of random variables that are uniformly distributed in $[0,1]$ and independent of $X$.  By Example \ref{indepthin}  and Lemma \ref{function}, let $R: \ns{0,1} \times [0,1] \to \ns{0,1}$ be a measurable function such that $R(X_1, U_1) \leq X_1$ is a Bernoulli random variable with mean $\iota(1)$.    Let $\Gamma$ and $\gamma$ be as in the proof of Theorem \ref{thick}, so that 
$$\big( (X_1, \ldots, X_n), \Gamma( X_1, \ldots, X_n, U_1) \big)$$ has law $\gamma$.  
  Consider the function $\Phi: \ns{0,1} ^{\Z} \times [0,1] ^{\Z}  \to    \ns{0,1} ^{\Z}$ defined by
$\Phi(x, u)_i = R(x_i, u_i)$ if $i$ is not in a fitted filler.  For $(j, j+1, \ldots, j+n)$ in a fitted filler, we set 
$$ (\Phi(x,u)_j, \ldots, \Phi(x,u)_{j+n}) = \Gamma(x_j, \ldots, x_{j+n}, u_j   ).$$
The law of $X$ restricted to a filler interval is the law of a finite sequence of i.i.d.\ 
Bernoulli random variables with mean $\kappa(1)$, conditioned not to contain a marker.    Note that since a fitted interval  is of size $n$, and a marker is of size $2n+1$,  the law of $X$ restricted to a fitted interval is just the law of a finite sequence of i.i.d.\ 
Bernoulli random variables with mean $\kappa(1)$.  Furthermore, conditioned on the locations of the markers, the restrictions of $X$ to each filler interval are independent (see for example Keane and Smorodinsky \cite[Lemma 4]{keanea} for a detailed proof).   Hence, $\Phi(X, U) \eqd Y$.  In addition, since  all the couplings involved are monotone, we easily have that $\Phi(X,U)_i \leq X_i$ for all $i \in \Z$. \erk  
\end{example}

\begin{remark}
	\label{KS2}
	To emphasize the strong form of independence in Example \ref{partition}, we note that 
	if $A=(A_i)_{i\in\bb Z}$ are independent Bernoulli random variables with mean $\tfrac{1}{2}$ that are independent of $X$, then $(A_{jn})_{j\in S}$ has the same law as $(X_{jn})_{j\in S}$. Recall if $j\in S$ then $X^{j} = (X_{jn}, \ldots, X_{(j+1)n -1}  )$ is special.
	In addition, if $X'$ is such that $X'_i= X_i$ for every $i$ not in a special filler of $X$ and on each special filler of $X$ we set $X'_{jn} = A_{jn}$, $X'_{jn+1} =1-A_{jn}$, and
	\begin{equation*}
		X'_{jn+2}=X'_{jn+3} = \cdots =X'_{(j+1)n-1}= 0,
	\end{equation*}
	then $X' \eqd X$.
	Thus we can independently resample on the special fillers without affecting the distribution of $X$.  \erk
\end{remark}

\subsection{The example of Ornstein and Weiss}

Let $\mathbb{F}_r$ be the free group of rank $r \geq 2$.  Let $a$ and $b$ be two of its generators.   The Ornstein and Weiss \cite{MR910005}  entropy increasing factor map is  given by   
$$\phi(x)(g) = (x(g) \oplus  x(ga), x(g) \oplus x(gb))$$
for all  $x \in \ns{0,1}^{\mathbb{F}_r}$ and all $g \in \mathbb{F}_2$, where 
$$\phi: \ns{0,1}^{\mathbb{F}_r} \to ( \ns{0,1} \times \ns{0,1} )^{\mathbb{F}_r} \equiv \ns{00,01,10,11}^{\mathbb{F}_r}$$
 pushes the uniform product measure $(\tfrac{1}{2}, \tfrac{1}{2}) ^{\mathbb{F}_r}$    forward to the uniform product measure $(\tfrac{1}{4}, \tfrac{1}{4}, \tfrac{1}{4}, \tfrac{1}{4}) ^{\mathbb{F}_r}$; 
the required independence follows from the observation that if $m \oplus n := m + n \bmod 2$, if $X$, $X'$, and $Y$ are
independent Bernoulli random variables with mean $\tfrac{1}{2}$, and if $Z := X \oplus Y$  and  $Z' := X' \oplus Y$, then $Z$ and $Z'$ are independent, even though they both depend on $Y$. 

Ornstein and Weiss's example can be iterated to produce an infinite number of bits at each vertex in the following way.    As in Ball \cite[Proposition 2.1]{ball3}, we will define  $\phi_k :  \ns{0,1} ^{\mathbb{F}_r} \to  (   \ns{0,1} ^{k}  ) ^{\mathbb{F}_r}$  inductively for $k \geq 2$.    Let  $\tilde{\phi_k}:   \ns{0,1} ^{\mathbb{F}_r} \to    \ns{0,1}  ^{\mathbb{F}_r}$ be the last coordinate of $\phi_k$ so that $\tilde{\phi_k}(x)(g) = [\phi_k(x)(g)]_k$ for all $x \in \ns{0,1}^{\mathbb{F}_r}$ and all $g \in \mathbb{F}_2$.
   Set $\phi_2 = \phi$ .  
 For $k \geq 3$, let $\phi_k$ be given by 
$$ \phi_k(x)(g) = \Big( [{\phi_{k-1} (x)(g)}]_1, \ldots, [{\phi_{k-1}(x)(g) }]_{k-2}, (\phi \circ \tilde{ \phi}_{k-1})(x)(g) \Big)$$
for all  $x \in \ns{0,1}^{\mathbb{F}_r}$ and all $g \in \mathbb{F}_2$.
At each step we are saving one bit to generate two new bits using the original map $\phi$.  
%
   The map  $\phi_k$ pushes   the uniform product measure $(\tfrac{1}{2}, \tfrac{1}{2}) ^{\mathbb{F}_r}$ forward to the uniform product measure on   $(   \ns{0,1} ^{k}  ) ^{\mathbb{F}_r}$. By taking the limit, we obtain the mapping $$\phi_{\infty}: \ns{0,1} ^{\mathbb{F}_r} \to  (   \ns{0,1} ^{\Z^{+}}  ) ^{\mathbb{F}_r}$$ which yields a sequence of i.i.d.\ fair bits at each coordinate $g\in \mathbb{F}_2$,  independently.    Note that $\phi_{\infty}(x)(g)_k = \phi_{n}(x)(g)_k$ for all $n > k$.  
In our proof of Theorem \ref{group} we will use this iteration, which Ball attributes to Tim\'ar.  

\section{Proof of the main theorem}

\begin{proof}[Proof of Theorem \ref{group}]

Let $r \geq 2$.  
We begin by extending the same monotone joining defined in Example \ref{partition} to a monotone joining of $\kappa^{\frgr r}$ and $\iota^{\frgr r}$. Let $X$ have law $\kappa^{\frgr r}$ and $Y$ have law $\iota^{\frgr r}$;   then $X = (X_g)_{g \in \frgr r} = (X(g))_{g \in \frgr r}$ are i.i.d.\ Bernoulli random variables with mean $\kappa(1)$.    
 As in the Ornstein and Weiss example, it will be sufficient to use only two generators $a$ and $b$ in the expression of our equivariant thinning. 
We refer to the string of generators and their inverses that make up the representation of an element in $\frgr r$ as a \dff{word}, and the individual generators and inverses as \dff{letters}. We call a word \dff{reduced} if its string of letters has no possible cancellations.

Consider $\frgr r$ as being partitioned into infinitely many $\bb Z$ copies $Z(w)$ in the following way. Let $\frgrp r $ be the set of  reduced words in $\frgr r$ that do not end in either $b$ or $b^{-1}$. For each  $w \in \frgrp r$, 
 set $Z(w):=\{wb^i\}_{i\in\Z}$. 
Indeed, any element in $\frgr r$ may be written as $wb^i$ for  unique reduced $w \in \frgrp r  $ and $i \in \Z$.   

 Let $n$ be sufficiently large for the purposes of Lemma \ref{fund}.    We define markers, fillers, fitted fillers, and special fillers on each of the $\Z$ copies in the obvious way.  For example, if $x \in \ns{0,1}^{\frgr r}$ and  $w \in \frgrp r$, then the set $\ns{wb^j, \ldots, wb^{j+2n+1}} $ is a marker  if $ x(wb^i)=0$ for all $i \in [j, 2n]$ and $x(wb^{2n+1}) =1$.  

 Let $U'=(U'_g)_{g \in{\frgr r}}$ be  i.i.d. uniform random variables independent of $X$.   Let $\Phi$ be as in Example \ref{partition}.  
Define $\hat\Phi : \{0,1\}^{\frgr r}\times [0,1]^{\frgr r}\rightarrow \{0,1\}^{\frgr r}$ by
$$\hat\Phi (x,u')_{wb^i}=\Phi \big(x(Z(w)),u'(Z(w) )\big)_i$$
for all $w \in \frgrp r$ and all $i \in \Z$, where $x(Z(w)):= (x(wb^j))_{j \in \Z}$ and $u'(Z(w)) := (u'(wb^j))_{j \in \Z}$.   
Thus we have the monotone joining $\Phi$ on each $\bb Z$ copy $Z(w)$ in $\frgr r$, so that 
 \begin{equation}
 \label{eqd}
 \hat\Phi (X,U')\eqd Y
 \end{equation}
 and $\hat\Phi (X,U')_g  \leq X_g$ for all $ g \in\frgr r$. Additionally,   since $\Phi$ is a joining, the joint law of 
 $(X, \hat\Phi (X,U'))$ is invariant under $\frgr r$-actions.

Recall that  a special filler has length exactly $n$, and the filler has two choices of values  $010^{n-2}$ or $100^{n-2}$, which occur with equal probability. We define an  \dff{initial vertex} of a special filler in $Z(w)$ to be an element $wb^{n_0} \in Z(w)$  where the entire special filler takes values sequentially at vertices on the minimal  path from $wb^{n_0}$ to $wb^{n_0+n}$.      For each $x \in \ns{0,1}^{\frgr r}$,    let $V=V(x)$ be the set of initial vertices in $\frgr r$. 
Note that as in Example \ref{partition}, the law of $X$ restricted to a fitted interval is just the law of a finite sequence of i.i.d.\ Bernoulli random variables with mean $\kappa(1)$.  Furthermore, conditioned on the locations of the markers, the restrictions of $X$ to each filler interval are independent. 
Thus for all $v \in V(X)$, $X(v)$ is a Bernoulli random variable with mean $\frac{1}{2}$, and conditioned on $V(X)$, the  random variables $(X(v))_{v \in V}$ are independent. 

We have the same strong form of independence here as emphasized in Remark \ref{KS2} for Example \ref{partition}, again by Keane and Smorodinsky \cite[Lemma 4]{keanea}. This is key in our construction:   we will use the Bernoulli random variables $(X(v))_{v \in V}$ to build deterministic substitutes for $U'$.  

Now we adapt the iteration of the Ornstein and Weiss example to  assign    a sequence of i.i.d.\ Bernoulli random variables to each $v \in V$.   For each $v \in V$, let $k$ be the smallest  positive integer such that  $va^k \in V$; set $\alpha(v) = va^k$.    Similarly, let $k'$ be the smallest positive integer such that $vb^{k'}\in V$ and set $\beta(v) = v b^{k'}.$    
For each $v \in V$, define
$$\psi(x)(v) =  \big( x(v) \oplus x( \alpha(v)), x(v) \oplus x( \beta(v)) \big).$$  Conditioned on $V$, we have that $ (\psi(X))_{v \in V}$ is a family of independent random variables uniformly distributed on $\ns{ 00, 01, 10, 11}$.  We  iterate the map $\psi$ as we did with the Ornstein and Weiss map $\phi$.   Set $\psi_2 = \psi$.    For $k \geq 3$, let 
$$ \psi_{k}(x)(v) =   \Big( [{\psi_{k-1} (x)(v)}]_1, \ldots, [{\psi_{k-1}(x)(v) }]_{k-2}, (\psi \circ \tilde{\psi}_{k-1})(x) (v) \Big),$$ 
where $\tilde{\psi}_{k-1}(x)(v) = [\psi_{k-1}(x)(v)]_{k-1}$ is the last coordinate of $\psi_k$.  
Let $\psi_{\infty}$ be the limit, and let $B_v = \psi_{\infty}(X)(v)$, so that conditioned on $V$,  the random variables $(B_v)_{v \in V}$ are independent, and  each $B_v$ is an i.i.d.\ sequence of  Bernoulli random variables  with mean $\frac{1}{2}$.

For all $x\in\{0,1\}^{\frgr r}$, let $\bar{x}(g) = x(g)$ for all $g$ not in a special filler, and let $\bar{x}(g) = 0$ if $g$ belongs to a special filler.  
It follows from Remark \ref{KS2} that if $B' = (B'_g)_{g \in \frgr r}$ are independent Bernoulli random variables with mean $\tfrac{1}{2}$ independent of $X$, then $(B'_v)_{v \in V(X) }$ has the same law as $(B_v)_{v \in V(X) }$. Moreover,
\begin{equation}
\label{use}
\big(\bar{X}, (B_v)_{v \in V(X) } \big) \eqd \big(\bar{X}, (B'_v)_{v \in V(X) }\big).
\end{equation}

 We assign, in an equivariant way,  one uniform random variable to each element in $\frgr r$ using the randomness provided by  $(B_v)_{v \in V }$.   Let $\cg:\ns{0,1}^{\N} \to [0,1]^{\N}$ be the function from Lemma \ref{rep},
 and let $g\in\frgr r$. 
Then almost surely there exist $v\in V$ and a minimal $j>0$ such that $gb^j=v$;
 set $U_g=\cg(B_v)_{j} $.  
 Define  $\uc:\ns{0,1}^{\frgr r} \to [0,1]^{\frgr r}$ by setting
  $ \uc(X):=(U_g)_{g\in\frgr r}$.  
 Recall that $U' = (U_g')_{g \in \frgr r}$ are independent random variables uniformly distributed in $[0,1]$ independent of $X$.     From \eqref{use},
\begin{equation}
\label{use2}
\big(\bar{X}, {\uc(X)} \big) \eqd \big(\bar{X}, {U'} \big).
\end{equation}

Let $R: \ns{0,1} \times [0,1] \to \ns{0,1}$ and   $\Gamma:\ns{0,1}^n \times [0,1] \to \ns{0,1}^n$ be the functions  that appear in the definition of $ \Phi$ in Example \ref{partition}.  Recall that $R$ facilitated independent thinning and $\Gamma$  the key monotone coupling of Lemma \ref{fund}.  Also recall   $\Gamma(100^{n-2}, t) = 0 = \Gamma(010^{n-2}, t)$ for all $t \in [0,1]$.

Now define $\phi:\{0,1\}^{\frgr r}\rightarrow \{0,1\}^{\frgr r}$ by
$$\phi(x)(g)=R\big(  x(g),\uc(x)(g)  \big)$$
for $g$ not in a fitted filler; if $\ns{wb^i,\ldots,wb^{i+n-1}}$ is a fitted filler, then set
$$(\phi(x)(wb^i),\ldots,\phi(x)(wb^{i+n-1}))=\Gamma\big(  x(wb^i),\ldots, x(wb^{i+n-1}) ,\uc(x)(wb^i)  \big).$$
Note  $\phi$ is defined so that $\phi(x) = \hat{\Phi}(x, \uc(x))$.  The map $\phi$ is equivariant and satisfies $\phi(x)(g) \leq x(g)$ by construction.    It remains to verify that $\phi(X) \eqd Y$.  

 By the definition of $\Gamma$, we have $\phi(X) = \phi(\bar{X})$; that is, all special fillers are sent to $0^n$.   A similar remark applies to the map $\hat{\Phi}$.    From   \eqref{eqd} and \eqref{use2}, 
 $$\phi(X) = \hat{\Phi}({X}, \uc(X)) =\hat{\Phi}(\bar{X}, \uc(X)) \eqd  \hat{\Phi}(\bar{X}, U') = \hat{\Phi}(X, U')\eqd Y.\qedhere$$
\end{proof}

\section{Generalizations and questions}

\subsection{Stochastic domination}

Let $[N] = \ns{0, 1, \ldots, N-1}$ be endowed with the usual total ordering.    Let $\kappa$ and $\iota$ be probability measures on $[N]$.   We say that $\kappa$ \dff{stochastically dominates} $\iota$ if $\sum_{i=0} ^j   \kappa_i \leq \sum_{i=0} ^j \iota_i$ for all $ j \in [N]$.   An elementary version of Strassen's theorem \cite[Theorem 11] {Strassen} gives that $\kappa$ stochastically dominates $\iota$ if and only if there exists a monotone coupling of $\kappa$ and $\iota$.   Notice that in the case $N=2$, we have that $\kappa$ stochastically dominates $\iota$ if and only  if $\iota$ is not of higher intensity than $\kappa$.   Thus Theorem \ref{group} gives a positive answer to a special case of the following question.

\begin{question}
\label{SDgen}
Let $\kappa$ and $\iota$  be probability measures on $[N]$, where $\kappa$ stochastically dominates $\iota$, and $\kappa$ gives positive measure to at least two elements of $[N]$.    Let $G$ be the free group of rank at least two.  Does there exist a measurable  equivariant map $\phi:  [N] ^G \to [N]^G$ such that the push-forward of $\kappa^G$ is $\iota^G$ and $\phi(x)(g) \leq x(g)$ for all $x \in [N] ^G$ and $g \in G$?
\end{question}  

In Question \ref{SDgen}, we call the map $\phi$ a \dff{monotone factor from $\kappa$ to $\iota$}. A necessary condition for the existence of a monotone factor from 
 $\kappa$ to $\iota$ is that $\kappa$ stochastically dominates $\iota$.    In the case $G= \Z$, Ball \cite{Ball} proved that there exists a monotone factor from $\kappa$ to $\iota$ provided that $\kappa$ stochastically dominates $\iota$, $H(\kappa) > H(\iota)$, and $\iota$ is supported on two symbols; Quas and Soo \cite{Qsc} removed the two symbol condition on $\iota$.

In the non-amenable case, where $G$ is a free group of rank at least two, one can hope that Question \ref{SDgen} can be answered positively, without any entropy restriction.    However, the analogue of Lemma \ref{fund} that was key to the proof of Theorem \ref{group}
 does not apply  in the simple case where $\kappa = (0,\tfrac{1}{2}, \tfrac{1}{2})$ and $\iota = (\tfrac{1}{3}, \tfrac{1}{3}, \tfrac{1}{3})$.  In particular, for all $n \geq 1$,  there is no coupling  $\rho$ of $\kappa^n$ and $\iota^n$ for which there exists $x \in \ns{1,2}^n$ and $y \in \ns{0,1,2}^n$ such that $\rho(x,y) = \kappa^n(x) = (\tfrac{1}{2})^n$, since $\rho(x,y) \leq \iota^n(y) = (\tfrac{1}{3})^n$.

\subsection{Automorphism-equivariant factors}

The Cayley graph of  $\mathbb{F}_n$ is  the regular tree $\mathbb{T}_{2n}$ of degree $2n$. We note that $\mathbb{F}_n$ is a strict subset of the group of graph automorphisms of $\mathbb{T}_{2n}$.     The map that we constructed in Theorem \ref{group} is not equivariant with respect to the full automorphism group of $\mathbb{T}_{2n}$.  In particular,  our definition of a marker is not equivariant with respect to the automorphism which exchanges $a$-edges and $b$-edges in $\bb T_{2n}$. However, Ball generalizes  the Ornstein and Weiss example to the full automorphism group in \cite[Theorem 3.3]{ball3} by proving 
that for any $d \geq 3$,  there exists a measurable mapping  $\phi: \ns{0,1} ^{\mathbb{T}_d }  \to [0,1] ^{ \mathbb{T}_d} $ which  pushes the uniform product measure on two symbols  forward to the product measure of Lebesgue measure on the unit interval, equivariant with respect to the group of automorphisms of $\mathbb{T}_d$. Moreover, she proved the analogous result for any tree with bounded degree, no leaves, and at least three ends.    

\begin{question}  Let $T$ be a tree with bounded degree, no leaves, and at least three ends.  Let $\kappa$ and $\iota$ be probability measures on $\ns{0,1}$ and $\iota$ be of lower intensity.     Does there exists    a  thinning  from $\kappa$ to $\iota$ that is   equivariant with respect to the full automorphism group of $T$?

\end{question}

\section*{Acknowledgements}

We thank the referee for carefully reviewing the paper and providing various helpful comments and suggestions.

\bibliographystyle{abbrv}
\bibliography{embedding}

\end{document}